\documentclass[11pt]{amsart}
\usepackage[margin=1.4in]{geometry}

\usepackage{graphicx}
\usepackage{commath}
\usepackage{stmaryrd}
\usepackage{xcolor}
\usepackage{bm}
\usepackage{hyperref}
\usepackage{mathabx}
\usepackage{tikz-cd}
\usepackage{enumitem}
\usepackage{caption}
\usepackage{amssymb}
\setlist[enumerate]{label=\arabic*., ref=\arabic*}

\newtheorem{lemma}{Lemma}

\newtheorem{theorem}{Theorem}
\theoremstyle{definition}
\newtheorem{definition}{Definition}

\newtheorem{proposition}{Proposition}

\newcommand{\sys}{\mathrm{sys}}
\newcommand{\M}{\mathcal{M}}
\newcommand{\R}{\mathbb{R}}
\newcommand{\G}{\mathcal{G}}
\newcommand{\Z}{\mathbb{Z}}

\renewcommand{\P}{\mathbb{P}}
\newcommand{\B}{\mathcal{B}}
\newcommand{\T}{\mathbb{T}}
\renewcommand{\H}{\mathbb{H}}

\title{A note on surfaces with large systoles}

\author{Yifei Cai}
\address{Qiuzhen College, Tsinghua University, Beijing 100084, China}
\email{caiyf23@mails.tsinghua.edu.cn}

\begin{document}

\maketitle

\begin{abstract}
     We show that for every sufficiently large genus $g$, there exists a closed hyperbolic surface $S_g$ with systole  $\sys(S_g)\geq \log g-12\log\log g$. In particular, $$\liminf_{g\to \infty}\frac{\max\{\sys(S):S\in \M_g\}}{\log g}\geq 1,$$ improving the previously known bound $2/9$.
     
     This note is a continuation of our previous work on the diameter of finite covers \cite{CL}, using the same framework of constant-twist pants decomposition to study systoles. The proof was developed by GPT-5.6 Sol through an extended discussion with the author.
\end{abstract}
\section{Introduction}

The systole $\sys(S)$ of a hyperbolic surface $S$ is the length of the shortest closed geodesic in $S$. It is known that the systole function $\sys:\M_g\to \R_{>0}$ attains a maximum \cite{Mum71} on the moduli space $\M_g$. A standard area argument shows that this maximum has an upper bound $$\max_{S\in \M_g}\sys(S)\leq 2\log(4g-2).$$

It is still widely open to determine the asymptotic behavior of this maximum. Brooks \cite{Br88} and Buser--Sarnak \cite{BS94} proved that $$\limsup_{g\to \infty}\frac{\max\{\sys(S):S\in \M_g\}}{\log g}\geq \frac{4}{3}.$$

Buser--Sarnak also showed that the corresponding limit inferior is positive. Recently, Katz--Sabourau \cite{KS25} obtained an explicit every-genus lower bound:
$$\liminf_{g\to \infty}\frac{\max\{\sys(S):S\in \M_g\}}{\log g}\geq\frac{19}{120},$$
and more recently, Liu--Petri \cite{LP23}, using a random construction, improved this to $\frac{2}{9}$.

In this note, we further improve the constant to 1. The main result is the following

\begin{theorem}\label{main}
    For every sufficiently large $g$, there exists a closed hyperbolic surface $S_g\in \M_g$ satisfying $$\sys(S_g)\geq \log g-12\log \log g.$$
    In particular,$$\liminf_{g\to \infty}\frac{\max\{\sys(S):S\in \M_g\}}{\log g}\geq 1.$$
\end{theorem}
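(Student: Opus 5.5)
We glue pairs of pants with a constant twist along a combinatorial graph of large girth, and we control the systole by comparing closed geodesics with non-backtracking paths in that graph. Concretely, fix a cuff length $L$ (to be chosen of order $\log g$). Take a $3$-regular graph $G$ with $2g-2$ vertices and girth at least roughly $\log_2 g - C\log\log g$; such graphs exist for every even vertex count by the Erdős--Sachs / Moore-bound constructions, after a small modification to hit exactly $2g-2$ vertices. Replace each vertex by a hyperbolic pair of pants with all three cuffs of length $L$. Glue along the edges with a fixed twist, in the constant-twist pants decomposition framework of \cite{CL}, which yields a closed surface $S_g$ of genus $g$.

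We then bound the length of every closed geodesic $\gamma$ on $S_g$ from below, splitting into two cases.

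\emph{Case 1: $\gamma$ is a cuff.} Then $\ell(\gamma)=L$, so we need $L\ge \log g-12\log\log g$.

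\emph{Case 2: $\gamma$ crosses cuffs.} Then $\gamma$ decomposes into arcs inside pants, each running between cuffs. When $L$ is large, the seams between distinct cuffs have length about $2e^{-L/4}$, which is tiny. This is the problem: short arcs cost almost nothing, so we must exploit the twist. With the constant twist chosen suitably (e.g. half-twist), consecutive arcs cannot both stay near the short seams. The constant-twist estimates of \cite{CL} should give that each crossing of a cuff contributes length at least about $c\,L$, or at least that every pair of consecutive crossings forces travel of length $\gtrsim L/2$ along a cuff.

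Combining this with the girth gives the bound. The projection of $\gamma$ to $G$ is a closed non-backtracking walk, or else $\gamma$ turns back into the same cuff, which again costs at least about $L$ by the twist. So $\gamma$ crosses at least $\mathrm{girth}(G)$ cuffs. Hence
$$\ell(\gamma)\gtrsim \mathrm{girth}(G)\cdot \min\big(\text{cost per crossing}\big).$$

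The two cases pull in opposite directions, and I expect this trade-off to be the main obstacle. Large $L$ makes crossings cheap (the seams shrink), while small $L$ makes the cuffs themselves short. An alternative I would try first is to take $L$ moderate, so that the systole is achieved along walks in $G$ with each pants-traversal costing a constant $c(L)$. We then need $c(L)\cdot\mathrm{girth}\ge \log g$, using girth $\approx \log_2 g$, so that $c(L)\ge \log 2$ suffices. Note also that a random or high-girth cubic graph gives girth about $\log_2 g$, while growth in a tree of pants is like $2^n$, matching the area bound only up to a constant. The "$\log g$" target with constant $1$ suggests the right normalization: choose $L$ so that the per-traversal cost is exactly $\log 2$ up to $O(\log\log g/\log g)$ errors. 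The $12\log\log g$ loss would come from (i) the girth deficit of explicit graphs, and (ii) the error terms in the hyperbolic trigonometry of the constant-twist gluing.

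The hard step is therefore a uniform lower bound for the length of a geodesic segment traversing a prescribed non-backtracking sequence of pants under constant twist, linear in the number of pants with slope at least $\log 2 - o(1)$. I would prove this by lifting to the universal cover and comparing with a tree quasi-isometry. The key is to show that successive lifted cuffs are separated by hyperbolic distance at least the per-step cost, and that the corresponding displacements add up without cancellation, by a convexity/right-angled-hexagon estimate.
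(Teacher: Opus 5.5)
There is a genuine gap, and it is exactly the step you single out as hard: the per-crossing lower bound you need is false in the only regime your construction allows.

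Every cuff is itself a closed geodesic, so Case 1 forces $L\ge \log g-12\log\log g$. This rules out the ``moderate $L$'' alternative (bounded $L$ with per-traversal cost $c(L)\ge\log 2$) at once, since it gives $\sys(S_g)\le L=O(1)$. But once $L\to\infty$, crossings become almost free. The seams have length about $2e^{-L/4}$, and with twist $1$ consecutive seam feet are offset only by $1$. So near a seam the cuff lifts pile up, and a segment of length $O(1)$ can cross on the order of $L$ of them. This is why Lemma \ref{KM} can only give ``at most $C_1 L$ crossings per length $e^{-5}$'', and that order is essentially attained. Three consequences follow:
\begin{itemize}
\item the minimal cost per crossing is of order $1/L$, not $\gtrsim cL$ and not $\log 2$;
\item the claim that consecutive crossings force travel $\gtrsim L/2$ along a cuff is false;
\item $\mathrm{girth}(G)\cdot\min(\text{cost per crossing})\approx \log_2 g/L=O(1)$.
\end{itemize}
Your suggested half-twist is actually a bad choice. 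The two seam feet on a cuff are antipodal, so a half-twist aligns them just as zero twist does, and the seams concatenate into closed geodesics of length roughly $2e^{-L/4}$ per crossing. The paper flags exactly this obstruction: a high-girth pants graph does not by itself give a large systole, because the length of a closed geodesic depends on its turn pattern, not just its combinatorial length.

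The missing idea is that cheap turn patterns are rare, so the gluing should be chosen to avoid closing any of them, rather than relying on girth. The paper proceeds as follows.
\begin{itemize}
\item It fixes the pants tree $T_a$ with $a=4\log n$ and twist $1$, and codes a geodesic arc of length $\le R$ starting on a cuff by its terminal cuff.
\item By area comparison plus Lemma \ref{KM}, there are only $O(a^2e^R)$ such codings, each of combinatorial length $O(aR)$.
\item The gluing is a fixed $n$-cycle plus a uniformly random perfect matching. A fixed coding of combinatorial length $l$ then closes with probability $O(l^3/n)$.
\item Hence, for $R=\log n-12\log\log n$, the expected number of closed geodesics of length $\le R$ through a given cuff is $O(a^5R^3e^R/n)=O((\log n)^{-4})$.
\item A union bound over the $n$ cuffs still fails, so the paper applies the lopsided Lov\'asz Local Lemma to minimal bad partial matchings with their conflict graph (Proposition \ref{finalaim}). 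This only needs $KL\to 0$, with $K=O((\log n)^2)$.
\end{itemize}
In this argument the constant $1$ comes from comparing $e^R$ with $n$. The $12\log\log g$ loss comes from the polylogarithmic factors $a^2$, $l^3$ and $K$, not from girth deficits or from $\log 2\cdot\log_2 g$. A fixed Erd\H{o}s--Sachs graph gives you no control over which of these $O(a^2e^R)$ codings close up.
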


\subsection{Graph-theoretic motivation}
The problem of finding a surface with a large systole has a strong graph-theoretical parallel. The analog of the systole is the \emph{girth} of a graph, which is by definition the length of its shortest cycle. For a $k$-regular graph $\G$ with $n$ vertices, the Moore bound gives $$\mathrm{girth}(\G)\leq (2+o(1))\log_{k-1}(n).$$ For cubic graphs, Biggs--Hoare \cite{BH83} introduced the family of cubic sextet graphs, and Weiss \cite{W84} proved that they satisfy $$\mathrm{girth}(\G)\geq \left(\frac{4}{3}+o(1)\right)\log_2(n).$$
On the other hand, Erd\"os--Sachs \cite{ES63} proved combinatorially that for \emph{every} sufficiently large even $n$, there exists a $k$-regular graph $\G$ on $n$ vertices, with $$\mathrm{girth}(\G)\geq (1-o(1))\log_{k-1}(n).$$
More recently, Linial--Simkin \cite{LS21} recovered this using a random greedy algorithm.

We have already seen a striking parallel between constants arising in graph theory and hyperbolic geometry. In particular, Petri--Walker \cite{PW}, using a construction inspired by Erd\"os--Sachs, produced a sequence of closed hyperbolic surfaces $S_{g_i}$ satisfying
$$\sys(S_{g_i})\geq (1-o(1))\log g_i.$$

Thus the constant 1 is already achievable along a sequence. The result of the present note shows that the same constant can be achieved in every sufficiently large genus.

However, there are problems if we directly transfer the graph-theoretic result to hyperbolic geometry. A closed geodesic may cross a large number of pairs of pants while the hyperbolic length remains short. Hence a high-girth pants decomposition graph does not automatically give a surface with a large systole.

\subsection{Proof strategy}

Let $n=2g-2$, and let $a=4\log n$.

Take $n$ identical hyperbolic pairs of pants $P_a$ with lengths of all three cuffs equal to $a$. To specify the gluing, first take a cycle with vertex set $\Z/n\Z$, and choose uniformly randomly a fixed-point free involution $$\iota:\Z/nZ\to \Z/n\Z, \ \iota^2=id,\ \iota(i)\ne i,$$ 
Adding the edges $(i,\iota(i))$ gives a random trivalent graph $\G$. Gluing the pants $P_a$ according to $\G$, with \emph{constant twists 1} along each cuff, gives a random closed hyperbolic surface $S_\iota$ of genus $g$. See Figure \ref{fig1} for an illustration.

\begin{figure}[h]
    \centering
    \includegraphics[width=0.6\linewidth]{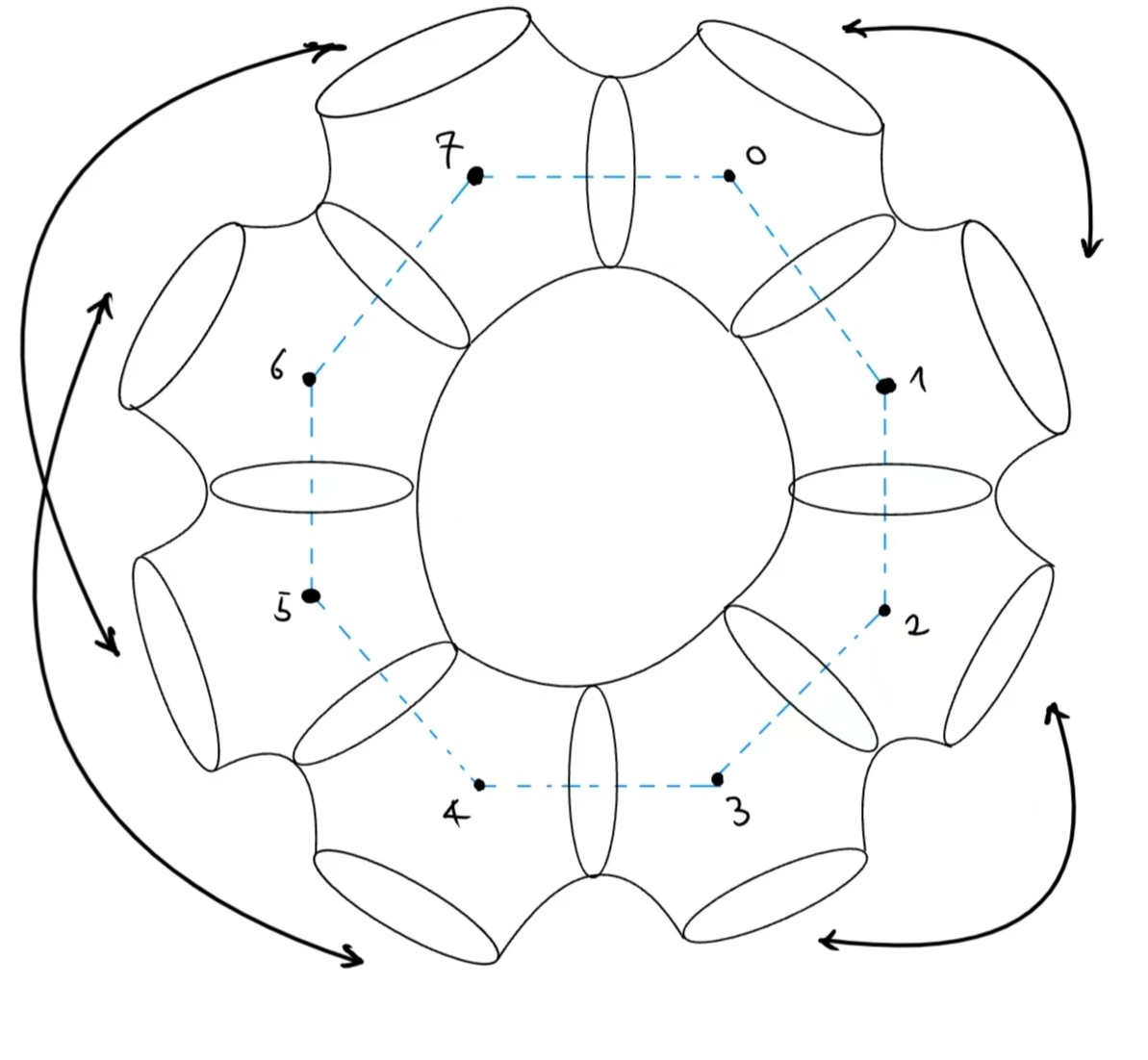}
    \caption{An illustration for the construction of $S_\iota$. Here $n=8$ and $\iota$ is taken to be $(01)(23)(46)(57)$.}
    \label{fig1}
\end{figure}

Set $$ R=\log n-12\log \log n.$$ We will show that with positive probability, the random surface $S_\iota$ contains no closed geodesic of length at most $R$.

First we encode short geodesic segments. Work in a fixed decorated (infinite) pants tree $T_a$, obtained by gluing $P_a$ along an infinite trivalent tree, with twists 1. Every surface $S_\iota$ defined above is a quotient of this same $T_a$; only the quotient map depends on $\iota$.

Fix a lift $\widetilde{c_0}$ of the initial cuff $c_0\subset S_\iota$. A geodesic segment in $S_\iota$ starting from $c_0$ lifts to a geodesic segment in $T$ starting from $\widetilde{c_0}$. We record the \emph{coding} of this segment by the terminal cuff $\widetilde{c}$ it reaches, which fully determines the combinatorial path from $\widetilde{c_0}$ to $\widetilde{c}$. A geometric estimate shows that in $T$, the number of reachable terminal cuffs from $\widetilde{c_0}$ via a segment of length at most $R$ is $$O(a^2e^R).$$ Hence the number of possible codings is $O(a^2e^R)$.

Also, using a Kahn--Markovi\'c estimate \cite{KM12}, the nonzero twists allow us to effectively compare hyperbolic distance and combinatorial distance:  a geodesic segment of length $\leq R$ crosses only $O(aR)$ cuffs, hence can only have coding of combinatorial length $l=O(aR)$. 

For a fixed coding of combinatorial length $l=O(aR)$, the probability that the random involution closes the corresponding path down in the surface is $O(l^3 n^{-1})$. Therefore, the probability that there exists a closed geodesic of length at most $R$ \emph{passing through $c_0$} is $$O\left(\frac{a^5R^3e^R}{n}\right)= O\left(\frac{1}{(\log n)^{4}}\right).$$

To get the final conclusion, a union bound over $n$ cuffs is insufficient. The crucial additional point is locality. A short geodesic meets only $O(aR)$ cuffs, and the corresponding bad event (i.e. short-geodesic event) can interact only with the bad events involving one of these cuffs. More precisely, the total probability mass of the bad events conflicting with a given bad event is small, and the Lov\'asz Local Lemma allows all bad events to be avoided simultaneously. This yields a surface with systole $>R$.

\subsection{Declaration on the use of AI}
Starting from the constant-twist pants decomposition approach described in this note, GPT-5.6 Sol (OpenAI) developed the first complete proof of the main theorem through an extended discussion with the author. The proof in this manuscript is checked, simplified and reorganized by the author. The author takes full responsibility for the content and correctness of this manuscript.

\subsection{Acknowledgments} The author would like to thank Qiliang Luo for many insightful discussions on this problem. The author would also like to thank Yi Huang for his support.

\section{Random model and the Lovász Local Lemma}

\subsection{Random involutions and bad partial matchings} Fix $n$ to be even. Let $\Omega_n$ denote the set of all fixed-point free involutions $$\iota:\Z/nZ\to \Z/n\Z, \ \iota^2=id,\ \iota(i)\ne i.$$
Equip $\Omega_n$ with the uniform distribution. Thus an element $\iota\in \Omega_n$ divides the $n$ elements of $\Z/n\Z$ into $n/2$ disjoint pairs. We first introduce a partial version of this matching.

\begin{definition}[Partial matching]
    A \emph{partial matching} $F$ is a collection of disjoint pairs of elements in $\Z/n\Z$. For a partial matching $$F=\{\{x_1,y_1\},\cdots,\{x_k,y_k\}\}, $$
    we say that an involution $\iota$ respects $F$, denoted by $F\subset \iota$, if for every $1\leq i\leq k$, $\iota(x_i)=y_i$. Moreover, define the \emph{canonical event} associated to $F$ as $$A_F=\{\iota\in \Omega_n:F\subset \iota\}.$$
\end{definition}
The probability of a canonical event can be calculated explicitly.
\begin{lemma}
    If $|F|=k=o(\sqrt{n})$, then the probability of its associated canonical event is
    $$\P(A_F)=\frac{(n-2k-1)!!}{(n-1)!!}\sim n^{-k}.$$
\end{lemma}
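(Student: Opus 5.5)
The plan is to count involutions directly: $\P(A_F)$ is the ratio of the number of fixed-point free involutions that respect $F$ to the total number $|\Omega_n|$. First, recall that $|\Omega_n|=(n-1)!!$. To see this, pair the smallest element with any of the remaining $n-1$ elements and then recurse on the remaining $n-2$ elements. This gives $(n-1)(n-3)\cdots 1$.

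Next, observe that $\iota\in A_F$ if and only if $\iota$ agrees with $F$ on the $2k$ elements $x_1,y_1,\dots,x_k,y_k$, which are distinct since the pairs in $F$ are disjoint. Because $\iota$ is an involution, $\iota(x_i)=y_i$ forces $\iota(y_i)=x_i$. Hence $\iota$ maps the complement $C$ of these $2k$ points to itself, and its restriction to $C$ is a fixed-point free involution. Conversely, every fixed-point free involution of $C$, combined with the pairs of $F$, gives an element of $A_F$. This is a bijection, and $|C|=n-2k$ is even, so $|A_F|=(n-2k-1)!!$. This proves the exact formula
$$\P(A_F)=\frac{(n-2k-1)!!}{(n-1)!!}.$$

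For the asymptotic, write the ratio as a product:
$$\frac{(n-2k-1)!!}{(n-1)!!}=\prod_{j=1}^{k}\frac{1}{n-2j+1}=n^{-k}\prod_{j=1}^{k}\Bigl(1-\frac{2j-1}{n}\Bigr)^{-1}.$$
Take logarithms and use $-\log(1-x)=x+O(x^2)$, valid uniformly for $x\le 2k/n=o(1)$. Then
$$\log\prod_{j=1}^{k}\Bigl(1-\frac{2j-1}{n}\Bigr)^{-1}=\sum_{j=1}^{k}\frac{2j-1}{n}+O\Bigl(\frac{k^3}{n^2}\Bigr)=\frac{k^2}{n}+O\Bigl(\frac{k^3}{n^2}\Bigr).$$
The hypothesis $k=o(\sqrt n)$ is exactly what makes $k^2/n\to 0$, and it also gives $k^3/n^2=o(n^{-1/2})\to 0$. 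So the correction factor tends to $1$ and $\P(A_F)\sim n^{-k}$.

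The only step needing care is this last one: the relative error is of order $k^2/n$, so the asymptotic equivalence genuinely requires $k=o(\sqrt n)$. If the later argument only needs an upper bound, note that each factor satisfies $(1-(2j-1)/n)^{-1}\le e^{O(k/n)}$, which gives $\P(A_F)\le n^{-k}e^{O(k^2/n)}$, a uniform bound usable when $k$ is polylogarithmic in $n$.
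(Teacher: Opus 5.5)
Your proof is correct and follows essentially the same route as the paper: you use the same count of involutions extending $F$, which gives $(n-2k-1)!!/(n-1)!!$, and then an elementary estimate of $\prod_{j=1}^{k}(n-2j+1)^{-1}$. The only difference is cosmetic: the paper bounds every factor by $(n-2k+1)^{-1}$ to get $n^{-k}e^{O(k^2/n)}$ and leaves the trivial lower bound $\P(A_F)\ge n^{-k}$ implicit, whereas your logarithmic expansion gives the sharper relative error $e^{k^2/n+O(k^3/n^2)}$ and handles both directions at once.
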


\begin{proof}
The number of fixed-point free involutions is $$|\Omega_n|=(n-1)!!.$$ 

If $|F|=k$, then after $2k$ elements in $\Z/n\Z$ are matched, the remaining $(n-2k)$ elements can be matched arbitrarily. Hence $$\P(A_F)=\frac{(n-2k-1)!!}{(n-1)!!}\leq (n-2k+1)^{-k}.$$The latter is $n^{-k}\exp(O(\frac{k^2}{n}))$, so $\P(A_F)\sim n^{-k}$ if $k=o(\sqrt{n})$.
\end{proof}

We next give them a geometric interpretation. Recall that our surface is obtained by gluing pairs of pants of cuff lengths $a$, with constant twist 1. First perform the gluings corresponding to the fixed $n$-cycle, and let $S_\varnothing$ denote the resulting hyperbolic surface, with $n$ boundary components. Exactly one cuff of each pair of pants remains unglued, and we denote them by $c_0,c_1,\cdots,c_{n-1}$. For a partial matching $$F=\{\{x_1,y_1\},\cdots,\{x_k,y_k\}\}, $$ let $S_F$ be the hyperbolic surface (possibly with boundary), obtained from $S_\varnothing$ by gluing $c_{x_i}$ to $c_{y_i}$ with twists 1. Similarly define $S_\iota$ for an involution $\iota\in \Omega_n$.

Fix a target length $R=\log n-12\log\log n$ as in the introduction.
\begin{definition}[Bad partial matchings]
    A partial matching $F$ is called \emph{$R$-bad,} if $S_F$ contains a closed geodesic of length at most $R$. 
    
    An $R$-bad partial matching $F$ is called \emph{minimal,} if no proper submatching of $F$ is $R$-bad. Let $\B_R$ denote the set of all minimal $R$-bad partial matchings.
\end{definition}

If $S_\iota$ contains a closed geodesic of length at most $R$, then it contains a partial matching $F\subset \iota$ which is minimal $R$-bad. So $$\{\iota\in \Omega_n:\sys(S_\iota)\leq R\}\subset \bigcup_{F\in \B_R}A_F.$$

In order to prove Theorem \ref{main}, it therefore suffices to show \begin{equation}\label{posprob}
    \P(\cap_{F\in \B_R}\overline{A_F})>0.
\end{equation}
\subsection{Conflict graphs and negative dependence}

In order to prove (\ref{posprob}), we introduce a local lemma in this subsection.

\begin{definition}[Conflict graph]
    Two partial matchings $F_1,F_2$ \emph{conflict,} if $F_1\cup F_2$ is not a partial matching. 
    
    Let $\mathcal{F}$ be a collection of partial matchings. We define a graph $G=G(\mathcal{F}),$ called \emph{conflict graph} of $\mathcal{F}$, as follows: 
    \begin{enumerate}
        \item $V(G)=\mathcal{F}$,
        \item $E(G)=\{(F_1,F_2):F_1\in \mathcal{F} \text{ and } F_2\in \mathcal{F} \text{ conflict}\}.$
    \end{enumerate}
\end{definition}
It turns out that the conflict graphs satisfy a good property.
\begin{definition}[Negative dependency graph]
    Let $A_1,\cdots, A_m$ be events. A graph $G$ on $\{1,\cdots,m\}$ is called a \emph{negative dependency graph,} if $$\P(A_i|\cap_{j\in S}\overline{A_j})\leq \P(A_i),$$for any index $i$ and subset $S\subset\{j:(i,j)\notin E(G)\}$, whenever the conditional probability is defined.
\end{definition}

\begin{theorem}[{\cite[Theorem 3]{Lu-Szekely-14}}]
    Let $\mathcal{F}$ be any collection of nonempty partial matchings, then the conflict graph $G(\mathcal{F})$ defined above is a negative dependency graph for $\{A_F\}_{F\in \mathcal{F}}$.
\end{theorem}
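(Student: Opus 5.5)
We need to show that for a fixed nonempty $F\in\mathcal F$ and any subfamily $\mathcal S\subset\mathcal F$ of matchings that do not conflict with $F$,
$$\P\Big(A_F\,\Big|\,\bigcap_{H\in\mathcal S}\overline{A_H}\Big)\le \P(A_F).$$
Writing $B=\bigcap_{H\in\mathcal S}\overline{A_H}$, this is the same as $\P(B\mid A_F)\le \P(B)$. The plan is to prove it by an injection (switching) argument on involutions, in the spirit of Lu--Sz\'ekely.

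Let $V(F)$ be the $2k$ points covered by $F$, where $|F|=k$. Partition $\Omega_n$ according to the restriction of $\iota$ to $V(F)$. Precisely, record the \emph{pattern} of $\iota$ on $V(F)$: the set of pairs of $\iota$ with both ends in $V(F)$, together with the map sending each point of $V(F)$ whose partner lies outside $V(F)$ to that partner. The event $A_F$ is a single pattern class $C_0$.

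The key step is to construct, for every pattern class $C$, a map $\phi_C: C_0\to C$ with two properties:
\begin{enumerate}
\item it is a bijection, or at least the fibres of all $\phi_C$ have uniform size, so that counting is preserved;
\item it preserves membership in $B$ in the right direction: if $\iota\in C_0\cap B$ then $\phi_C(\iota)\in B$.
\end{enumerate}
Given such maps, $|C_0\cap B|/|C_0|\le |C\cap B|/|C|$ for every $C$. Averaging over $C$ with weights $|C|/|\Omega_n|$ then gives $\P(B\mid A_F)\le\P(B)$.

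For the construction I would use the following surgery. Take $\iota\in C_0$, so $\iota$ contains $F$ and some matching $M$ on the complement of $V(F)$. To produce an element of $C$, we must rematch $V(F)$ according to $C$. The points of $V(F)$ that $C$ sends outside go to prescribed partners $w_1,\dots,w_r$. Each $w_j$ was matched in $M$, and we repair by re-pairing the orphaned $M$-partners of the $w_j$ among themselves, or with one another along the resulting alternating paths. This is a canonical switching, so the map is well defined; a symmetry count shows $|C|/|C_0|$ equals the number of preimages, uniformly.

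The main obstacle is property (2). The worry is that the new pairs created by the surgery might complete some $H\in\mathcal S$, pushing $\phi_C(\iota)$ out of $B$. The non-conflict hypothesis is exactly what controls this. Since each $H\in\mathcal S$ does not conflict with $F$, $H\cup F$ is a partial matching, so $H$ contains no pair meeting $V(F)$ other than pairs already in $F$. Hence the pairs of $\phi_C(\iota)$ that touch $V(F)$ but are not in $F$ lie in no $H$.

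The delicate part is the repaired pairs among the orphaned $M$-partners, since these could a priori lie in some $H$. If a direct argument fails there, I would fall back on the cleaner route in Lu--Sz\'ekely: show that the family of canonical events satisfies their ``$\epsilon$-near-positive'' switching criterion. Alternatively, I would reduce to the case where the weighted average is taken over uniformly random choices of the re-pairing. In that case it suffices that the induced distribution on $M$-modifications is again uniform on the complement, and that forbidding $H$-pairs outside $V(F)$ is a monotone decreasing event invariant under the averaging.
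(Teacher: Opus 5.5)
The paper does not prove this statement; it quotes it from Lu--Sz\'ekely. Your proposal has a genuine gap, and it goes beyond the ``delicate'' orphan step: the class-by-class comparison you are aiming for is false. A map $\phi_C\colon C_0\to C$ with uniform fibres and $\phi_C(C_0\cap B)\subseteq B$ would force $\P(B\mid C_0)\le \P(B\mid C)$ for \emph{every} pattern class $C$. Here is a counterexample. Take $F=\{\{x,y\}\}$ and $\mathcal S=\{H\}$ with $H=\{\{a,b\}\}$ and $\{a,b\}\cap\{x,y\}=\varnothing$, so $H$ does not conflict with $F$. Let $C$ be the class $x\mapsto u$, $y\mapsto v$ with $u,v\notin\{a,b\}$. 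Then $\P(B\mid C_0)=1-\frac{1}{n-3}$, but $\P(B\mid C)=1-\frac{1}{n-5}$, which is smaller. For $n=8$ one has $|C_0|=15$, $|C_0\cap B|=12$, $|C|=3$, $|C\cap B|=2$, and $5\cdot 2<12$, so no such $\phi_C$ exists. This is exactly your orphan phenomenon: $\iota\supseteq\{\{x,y\},\{u,a\},\{v,b\}\}$ is forced to acquire $\{a,b\}$. The inequality $\P(B\mid A_F)\le\P(B)$ only holds after averaging over classes; for instance, classes with $x\mapsto a$ have $\P(B\mid C)=1$. So no cleverer re-pairing rule, deterministic or randomized, can rescue a class-wise argument. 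Your fallbacks (invoking a near-positivity criterion, or ``averaging over re-pairings'') are pointers, not a proof.

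The fix is to reverse the direction and use a single global switching. Write $F=\{\{x_1,y_1\},\dots,\{x_k,y_k\}\}$ and let $F_j$ be its first $j$ pairs. Define $\psi\colon\Omega_n\to A_F$ by processing $j=1,\dots,k$ in turn: if the current involution pairs $x_j$ with $y_j$, do nothing; otherwise replace its pairs $\{x_j,u\},\{y_j,v\}$ by $\{x_j,y_j\},\{u,v\}$. Since $u,v\notin\mathrm{supp}(F_j)$, step $j$ maps $A_{F_{j-1}}$ onto $A_{F_j}$, and each element of $A_{F_j}$ has exactly $1+2(n/2-j)=n-2j+1$ preimages. Hence $\psi$ is exactly $|\Omega_n|/|A_F|$-to-one. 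For a map \emph{into} $A_F$, newly created pairs are harmless and only destroyed pairs matter. $\psi$ destroys only pairs that meet $\mathrm{supp}(F)$ without belonging to $F$, and by non-conflict no $H\in\mathcal S$ contains such a pair. Hence $\omega\in A_H\Rightarrow\psi(\omega)\in A_H$, i.e.\ $\psi^{-1}(A_F\cap B)\subseteq B$. Therefore $|B|\ge\frac{|\Omega_n|}{|A_F|}\,|A_F\cap B|$, which is $\P(B\mid A_F)\le\P(B)$. Your observation about pairs touching $V(F)$ is the right key point. It just has to be used for a switching into $A_F$, where only deletions matter, rather than out of it, where insertions matter.
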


The main machine of the proof is the next lopsided Lov\'asz Local Lemma, which was first introduced in \cite{ES91}, and we use the following formulation from {\cite[Lemma 1]{Lu-Szekely-14}}.

\begin{theorem}[Lopsided Lov\'asz Local Lemma]\label{LLL}
    Let $A_1,\cdots,A_m$ be events with a negative dependency graph $G$. If there exist numbers $x_1,\cdots,x_m\in [0,1)$ such that \begin{equation}\label{LLLhypo}
        \P(A_i)\leq x_i\prod_{(i,j)\in E(G)}(1-x_j),
    \end{equation}for every $i$, then $$\P(\cap_{i=1}^m \overline{A_i})\geq \prod_{i=1}^m(1-x_i)>0.$$
\end{theorem}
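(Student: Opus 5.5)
The plan is the standard inductive proof of the local lemma. The only change is that the independence step is replaced by the negative dependency inequality. The key intermediate claim is the following. For every index $i$ and every set $S\subset\{1,\dots,m\}\setminus\{i\}$, we have $\P(\cap_{j\in S}\overline{A_j})>0$ and
$$\P\Big(A_i\,\Big|\,\bigcap_{j\in S}\overline{A_j}\Big)\leq x_i .$$
I would prove both parts together by induction on $|S|$. The positivity part has to be carried along because the definition of a negative dependency graph only applies when the conditional probability is defined.

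\textbf{Positivity.} Write $S=\{j_1,\dots,j_s\}$. By the chain rule,
$$\P\Big(\bigcap_{t\le s}\overline{A_{j_t}}\Big)=\prod_{t=1}^{s}\Big(1-\P\big(A_{j_t}\,\big|\,\textstyle\bigcap_{u<t}\overline{A_{j_u}}\big)\Big).$$
Each conditioning set $\{j_u:u<t\}$ is strictly smaller than $S$ and avoids $j_t$. By the induction hypothesis each conditional probability is defined and at most $x_{j_t}<1$. Hence the product is at least $\prod_t(1-x_{j_t})>0$.

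\textbf{The bound.} Split $S=S_1\sqcup S_2$, where $S_1=\{j\in S:(i,j)\in E(G)\}$ and $S_2$ consists of the non-neighbours of $i$.

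If $S_1=\varnothing$, the definition of a negative dependency graph gives $\P(A_i\mid\cap_{S}\overline{A_j})\le\P(A_i)$. Hypothesis (\ref{LLLhypo}) then gives $\P(A_i)\le x_i$.

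Otherwise, write
$$\P\Big(A_i\,\Big|\,\bigcap_{S}\overline{A_j}\Big)=\frac{\P\big(A_i\cap\bigcap_{S_1}\overline{A_j}\,\big|\,\bigcap_{S_2}\overline{A_j}\big)}{\P\big(\bigcap_{S_1}\overline{A_j}\,\big|\,\bigcap_{S_2}\overline{A_j}\big)}.$$

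For the numerator, drop the intersection with $\cap_{S_1}\overline{A_j}$, so it is at most $\P(A_i\mid\cap_{S_2}\overline{A_j})$. Since $S_2$ contains only non-neighbours of $i$, negative dependency bounds this by $\P(A_i)$. By (\ref{LLLhypo}), this is at most $x_i\prod_{(i,j)\in E(G)}(1-x_j)$.

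For the denominator, list $S_1=\{k_1,\dots,k_r\}$ and expand by the chain rule:
$$\prod_{t=1}^r\Big(1-\P\big(A_{k_t}\,\big|\,\textstyle\bigcap_{S_2}\overline{A_j}\cap\bigcap_{u<t}\overline{A_{k_u}}\big)\Big).$$
Each conditioning set $S_2\cup\{k_u:u<t\}$ is strictly smaller than $S$, since $r\ge1$, and does not contain $k_t$. So the induction hypothesis bounds each factor below by $1-x_{k_t}$. The denominator is therefore at least $\prod_{j\in S_1}(1-x_j)\ge\prod_{(i,j)\in E(G)}(1-x_j)$. Dividing, the quotient is at most $x_i$, which closes the induction.

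\textbf{Conclusion.} Apply the chain rule once more with the natural ordering:
$$\P\Big(\bigcap_{i=1}^m\overline{A_i}\Big)=\prod_{i=1}^m\Big(1-\P\big(A_i\,\big|\,\textstyle\bigcap_{j<i}\overline{A_j}\big)\Big)\ge\prod_{i=1}^m(1-x_i)>0,$$
using the claim with $S=\{1,\dots,i-1\}$.

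The only delicate step is the bookkeeping in the induction. The negative dependency inequality may be applied only when the conditioning set consists entirely of non-neighbours, which is why the numerator is first reduced to conditioning on $S_2$ alone. Every conditional probability must also be shown to be defined before it is used, which the simultaneous positivity statement handles. The rest is the routine chain-rule computation.
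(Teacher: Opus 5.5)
Your proof is correct. It is the standard Erd\H{o}s--Spencer induction with the negative-dependence inequality used in place of independence, and carrying the positivity claim through the induction properly handles the ``whenever the conditional probability is defined'' clause. The paper gives no proof of its own and cites Lu--Sz\'ekely, and the standard argument behind that citation is essentially the one you give.
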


\subsection{A local criterion}

Let $\mathcal{F}$ be any collection of nonempty partial matchings. We use here Theorem \ref{LLL} for the conflict graph $G=G(\mathcal{F})$ to deduce a useful criterion. For $i\in \Z/n\Z$, define $$L_i(\mathcal{F})=\sum_{F\in\mathcal{F},\ i\in \mathrm{supp}(F)}\P(A_F),$$
    where $\mathrm{supp}(F)$ is the set of $2|F|$ elements touched by $F$. Also write \begin{equation}\label{defKL}
        K=\max_{F\in \mathcal{F}}|F|,\ \ L=\max_{i\in \Z/n\Z}L_i(\mathcal{F}).
    \end{equation}

\begin{proposition}\label{finalaim}
    If $8KL\leq 1$, then $$\P(\cap_{F\in \mathcal{F}}\overline{A_F})>0.$$
\end{proposition}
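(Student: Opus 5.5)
We apply the lopsided Lovász Local Lemma (Theorem \ref{LLL}) to the events $\{A_F\}_{F\in\mathcal{F}}$. By the cited result of Lu--Sz\'ekely, the conflict graph $G(\mathcal{F})$ is a negative dependency graph for these events. We may discard every $F$ with $\P(A_F)=0$, since such an $A_F$ is empty and does not affect the intersection. We may also assume $\mathcal{F}$ is finite, which holds because there are finitely many partial matchings on $\Z/n\Z$. The natural choice of weights is
$$x_F = 2\P(A_F).$$
Since $F$ is nonempty, $\P(A_F)\le L_i(\mathcal{F})\le L$ for any $i\in\mathrm{supp}(F)$. Also $K\ge 1$, so $8KL\le 1$ gives $L\le 1/8$. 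Hence $x_F\le 1/4<1$, as Theorem \ref{LLL} requires.

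The main step is to bound the neighbourhood of $F$ in the conflict graph. If $F'$ conflicts with $F$, then $F\cup F'$ is not a partial matching. This forces a common element: the two matchings pair some $x$ differently, or one pair contains an element that the other matches elsewhere. In either case $\mathrm{supp}(F')\cap \mathrm{supp}(F)\neq\varnothing$. Summing over the $2|F|\le 2K$ elements $i\in\mathrm{supp}(F)$ therefore gives
$$\sum_{F'\sim F}\P(A_{F'})\le \sum_{i\in \mathrm{supp}(F)} L_i(\mathcal{F})\le 2KL.$$

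Next we verify hypothesis (\ref{LLLhypo}). We use $1-x\ge e^{-2x}$ for $x\in[0,1/2]$, which holds because $\log(1-x)$ is concave and the two sides agree at $0$ and nearly at $1/2$. Since $x_{F'}\le 1/4$, this yields
$$\prod_{F'\sim F}(1-x_{F'})\ \ge\ \exp\Big(-2\sum_{F'\sim F}x_{F'}\Big)= \exp\Big(-4\sum_{F'\sim F}\P(A_{F'})\Big)\ge e^{-8KL}\ge e^{-1}.$$
Therefore
$$x_F\prod_{F'\sim F}(1-x_{F'})\ \ge\ 2e^{-1}\P(A_F)\ \ge\ \P(A_F),$$
because $2/e>1$. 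Theorem \ref{LLL} then gives $\P(\cap_{F}\overline{A_F})\ge\prod_F(1-x_F)>0$.

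The only delicate point is the counting step: one must check that a conflict always produces a shared support element. The rest is the bookkeeping of constants, where the margin $2/e>1$ is what makes the factor $8$ suffice.
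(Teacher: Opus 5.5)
Your proof is correct and is essentially the paper's argument: you use the same weights $x_F=2\P(A_F)$, the same observation that conflicting matchings share a support element, and the same bound $\sum_{F'\sim F}\P(A_{F'})\le 2KL\le 1/4$. The differences are cosmetic. You bound the product using $1-x\ge e^{-2x}$ to get $e^{-1}$, whereas the paper uses $\prod(1-x_{F'})\ge 1-\sum x_{F'}\ge 1/2$; your inequality does hold on $[0,1/2]$, but because $\log(1-x)+2x$ is concave, zero at $0$ and positive at $1/2$, not because the sides ``nearly agree'' there. You also explicitly check $x_F<1$, which the paper leaves implicit.
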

\begin{proof}
    Write $p_F=\P(A_F)$ and set $x_F=2p_F$. 
    
    If $F'$ and $F$ conflict, then $\mathrm{supp}(F)\cap \mathrm{supp}(F')\ne \varnothing$. Therefore $$\sum_{(F,F')\in E(G)}p_{F'}\leq \sum_{i\in \mathrm{supp}(F)}\sum_{\substack {F'\in \mathcal{F},\\ i\in \mathrm{supp}(F')}}p_{F'}\leq \sum_{i\in \mathrm{supp}(F)}L_i(\mathcal{F})\leq \frac{1}{4},$$by assumption. Therefore $$\prod_{(F,F')\in E(G)}(1-x_{F'})\geq 1-\sum_{(F,F')\in E(G)}x_{F'}\geq \frac{1}{2}.$$ Consequently $$x_F\prod_{(F,F')\in E(G)}(1-x_{F'})\geq p_F=\P(A_F),$$ which is exactly the hypothesis (\ref{LLLhypo}). The Lov\'asz Local Lemma then gives the positive probability.
\end{proof}

We will apply this proposition for $\mathcal{F}=\B_R$. Thus the proof of Theorem \ref{main} is reduced to two estimates for $K$ and $L$. Sections 3 and 4 establish these two bounds, respectively.

\section{Short geodesics and the codings}
In this section we introduce a fixed infinite pants tree which simultaneously covers all the random surface $S_{\iota}$. This allows us to encode geometrically short curves in a space which is independent of the choice of random involutions. We then use the Kahn--Markovi\'c estimate to bound both the combinatorial complexity and the number of possible codings of such curves.
\subsection{Pants tree cover}

Let $P_a$ be the hyperbolic pair of pants whose three cuffs have lengths $a$. Let
 $$\T=\mathrm{Cay}\left(\langle r,m|m^2=1\rangle\right)$$ be the infinite trivalent tree, whose vertices are represented by reduced words in symbols $r,r^{-1},m$. We now put a copy $P_w$ of $P_a$ over all vertex $w\in V(\T)$ and glue them along the edges, with constant twist 1. Denote the resulting \emph{pants tree} by $T_a$.

Now we explain how $T_a$ covers $S_\iota$. The $r^{\pm1}$-edges encode the deterministic cycle edge, while the $m$-edges encode the random matching edges.

Let $\tau:\Z/n\Z\to \Z/n\Z, \tau(k)=k+1$ be the translation. On the graph level the map is given by $$\phi_{\iota}:r^{q_1}mr^{q_2}\cdots mr^{q_k}\mapsto \tau^{q_1}\iota \tau^{q_2}\cdots \iota\tau^{q_k}(i), $$where $i\in \Z/n\Z$ is arbitrarily fixed. Since the pairs of pants and the gluing maps are the same on corresponding edges, the graph cover $\phi_\iota $ induces a surface cover $$\pi_\iota:T_a\to S_\iota.$$

\subsection{Geometric complexity of short geodesics}
We recall two geometric properties of the pants tree $T_a$.

First, for a short geodesic segment in $T_a$, its combinatorial path is non-backtracking.

\begin{lemma}\label{nonback}
    If $\beta$ is a geodesic segment of length $<a/2$, then its projection to $\T$ is non-backtracking.
\end{lemma}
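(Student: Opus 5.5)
Throughout, the projection of $\beta$ to $\T$ means the sequence of pants $P_{w_0},P_{w_1},\dots,P_{w_m}$ that $\beta$ visits, where consecutive pants are adjacent along cuffs that $\beta$ crosses transversally. Backtracking means that for some $j$ we have $w_{j-1}=w_{j+1}$. Equivalently, $\beta$ enters $P_{w_j}$ through a cuff $\gamma$ and leaves it through the same cuff $\gamma$. I will argue by contradiction: I will show that any such sub-arc of $\beta$ has length at least roughly $a/2$ (in fact about $a$), contradicting $\ell(\beta)<a/2$.

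Suppose $\beta$ backtracks at $P_{w_j}$. Then $\beta$ contains a sub-arc $\beta_j\subset P_{w_j}$ whose two endpoints lie on the same cuff $\gamma$ of $P_{w_j}$. Since $\beta$ is a geodesic in $T_a$, so is $\beta_j$. We may also take $\beta_j$ to have positive length with interior in the interior of $P_{w_j}$, so it does not merely touch $\gamma$ tangentially.

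First, $\beta_j$ is not homotopic rel $\gamma$ into $\gamma$. Indeed, $T_a$ is a hyperbolic surface, so its universal cover is convex in $\H$, and $\gamma$ lifts to complete geodesics. A geodesic arc with both endpoints on a geodesic and homotopic into it would form a geodesic bigon in $\H$, which is impossible. Hence $\beta_j$ is an essential arc in $P_a$ from $\gamma$ back to $\gamma$.

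Next, I bound the length of such an arc from below. In $P_a$, an essential arc from $\gamma$ to $\gamma$ must separate the other two cuffs; topologically it winds once around one of them, say $\gamma'$. Its length is therefore at least the length of the shortest arc in its relative homotopy class. That shortest arc is the simple orthogeodesic from $\gamma$ to itself. Its length $d$ is determined by right-angled hexagon trigonometry. Cutting $P_a$ along its three seams gives two right-angled hexagons with alternate sides $a/2$. The orthogeodesic cuts one of these hexagons into two right-angled pentagons, and one of those pentagons has a side of length $a/4$ on $\gamma'$ opposite to the side of length $d/2$. The pentagon relation $\cosh(d/2)=\sinh(\ell_1)\sinh(\ell_2)$ then gives a lower bound. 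Alternatively, I can use a collar argument: an arc from $\gamma$ to itself going around $\gamma'$ must cross the collar of $\gamma'$ or pass near the seam. Either way I expect $d\ge a-O(1)$, or at least the crude bound $\sinh(d/2)\ge \cosh(a/2)$ type estimate $d\ge a$. This exceeds $a/2$ for all $a$ under consideration ($a=4\log n$ is large), which gives the contradiction.

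The main obstacle is making the orthogeodesic length bound rigorous and in the correct form. It must hold for arbitrary, possibly non-minimal, essential arcs, which I handle by minimality in the relative homotopy class. It must also hold for both possible homotopy classes (winding around either of the other two cuffs), which are symmetric because all three cuff lengths equal $a$. I will first try the explicit right-angled pentagon computation. The twists play no role in this argument, since backtracking is a statement inside a single pair of pants.
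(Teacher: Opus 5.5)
Your reduction is the paper's. A backtrack gives a sub-arc of $\beta$ inside one pair of pants with both endpoints on the same cuff $\gamma$. That sub-arc is essential; your bigon argument supplies a detail the paper leaves implicit. So it is at least as long as the orthogeodesic from $\gamma$ to itself, whose length $d$ comes from a right-angled pentagon.

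The gap is in that last step, which carries all the quantitative content, and as written it is wrong. The orthogeodesic does not have length about $a$: it runs alongside only half of $\gamma'$, and in fact $d=a/2+2\log 2+o(1)$ as $a\to\infty$. So both ``$d\ge a-O(1)$'' and ``$\sinh(d/2)\ge\cosh(a/2)$'' are false for large $a$. The room above $a/2$ is only an additive constant. A collar argument will not deliver it, since the collar of $\gamma'$ has width only about $2e^{-a/2}$, so the exact trigonometry is genuinely needed.

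Your pentagon is also misidentified. Half of the orthogeodesic is the common perpendicular, in one of the two hexagons (alternate sides $a/2$), from the side on $\gamma$ to the opposite seam joining $\gamma'$ and $\gamma''$. By the symmetry of the hexagon it meets the $\gamma$-side at its midpoint. The pentagon it cuts off has consecutive sides:
\begin{itemize}
\item $a/4$ on $\gamma$ (adjacent to the $d/2$ side, not opposite it);
\item the seam joining $\gamma$ and $\gamma'$;
\item $a/2$ on $\gamma'$;
\item half of the opposite seam;
\item $d/2$.
\end{itemize}
The convenient relation is the one for the side on $\gamma'$, whose two non-adjacent sides are the $a/4$ and $d/2$ sides:
\[
\cosh\frac a2=\sinh\frac a4\,\sinh\frac d2 .
\]
This is exactly the formula the paper uses; the relation with $\cosh(d/2)$ on the left involves the seam length instead. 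Since $\cosh(a/2)=1+2\sinh^2(a/4)$, it gives $\sinh(d/2)=2\sinh(a/4)+1/\sinh(a/4)>\sinh(a/4)$, hence $d>a/2$, which is all the lemma needs.

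A smaller point: there is only one simple essential arc class from $\gamma$ to itself, so winding around $\gamma'$ and winding around $\gamma''$ give the same arc. Your sub-arc need not be simple, so do not compare it with the minimiser in its own class. Compare it instead with the distance between distinct lifts of $\gamma$, i.e.\ the shortest orthogeodesic from $\gamma$ to itself, which is the simple one.
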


\begin{proof}
    If it is backtracking, then $\beta$ must contain a segment which is a segment in a pair of pants $P_a$ with its endpoints in the same cuff. 

    This segment has length larger than that of an orthogeodesic, whose length $d$ must satisfy a standard right-angle pentagon formula \cite[Theorem 2.3.4]{Buser}: $$\sinh\frac{a}{4}\sinh\frac{d}{2}=\cosh\frac{a}{2}.$$
    
    A simple calculation then implies $d>a/2,$ and hence the lemma follows.
\end{proof}

Second, we will use Kahn--Markovi\'c's estimate to give a uniform comparison between the hyperbolic and combinatorial distances. Let $\Lambda$ be the lift of the union of all cuffs in $T_a$ to $\H^2$.

\begin{lemma}[{\cite[Lemma 2.2]{KM12}}]\label{KM}
    Let $\sigma$ be a geodesic segment in $\H^2$ of length less than $e^{-5}$, which transversely intersects $\Lambda$. Then there exists a universal constant $C_1>0$, such that $|\sigma\cap \Lambda|\leq C_1a.$
\end{lemma}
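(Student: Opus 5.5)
The plan is to reduce the count to a one-dimensional drift argument: each cuff crossed by $\sigma$ forces a fixed shift, of size the twist $1$, in a position coordinate that must stay inside a window of length $O(a)$. Since $\Lambda$ is a disjoint union of complete geodesics, I order the components of $\Lambda$ met by $\sigma$ as $\gamma_1,\dots,\gamma_N$, in the order $\sigma$ crosses them. Each crossing point $p_j=\sigma\cap\gamma_j$ is then within $\epsilon:=e^{-5}$ of every other $p_k$. Between $\gamma_j$ and $\gamma_{j+1}$ lies a lift of a copy of $P_a$, and $\gamma_j,\gamma_{j+1}$ are two distinct cuffs of it. Because $\sigma$ is geodesic and short, Lemma~\ref{nonback} applies, so consecutive crossings lie on different cuffs of the same pants and the path in $\T$ is non-backtracking.

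\textbf{Step 1: local geometry of one pair of pants.} In $P_a$ the seam between two cuffs has length $d$. The right-angled hexagon formula gives $\cosh d=(\cosh(a/2)+\cosh^2(a/2))/\sinh^2(a/2)$, so $d\asymp e^{-a/4}$. Moving a distance $t$ along a cuff from the seam foot, the distance to the other cuff grows like $d\cosh t$. Since $p_j,p_{j+1}$ are $\epsilon$-close, the signed offset $t_j$ of $p_j$ from the foot of the seam $s_j$ (joining $\gamma_j$ to $\gamma_{j+1}$) satisfies
$$|t_j|\le \log(2\epsilon/d)\le a/4-5+O(1).$$
Likewise the offset of $p_{j+1}$ from the other foot of $s_j$ on $\gamma_{j+1}$ is $t_j+O(\epsilon)$, by comparison with the two nearly parallel geodesics.

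\textbf{Step 2: the twist produces a drift.} On the cuff $\gamma_{j+1}$ there are two seam feet coming from the pants on the far side, at distance $a/2$ from each other. With constant twist $1$, the feet from the two sides of $\gamma_{j+1}$ are offset by $1$ modulo $a/2$. The twist has the same sign for every crossing, since a twist is a left twist independently of the direction of traversal. Hence
$$t_{j+1}\in\{t_j+1,\;t_j+1\pm a/2\}+O(\epsilon).$$
Each of $t_j,t_{j+1}$ lies in a window of total length at most $a/2-10+O(1)$. So the shifted options $t_j+1\pm a/2$ fall outside the window, and we get $t_{j+1}=t_j+1+O(\epsilon)$ exactly: there is no wrap-around. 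Iterating over the $N-1$ consecutive pants crossed by $\sigma$ gives $N\le a/2+O(1)\le C_1a$.

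\textbf{Main obstacle.} The expected hard step is controlling how the $O(\epsilon)$ errors accumulate. A naive sum gives an error $O(N\epsilon)$, which is fine only once we already know $N=O(a)$. I would get around this in one of two ways. The first is to work with a single global coordinate: take the offsets $t_j$ as the signed positions of the orthogonal projections of the fixed point $p_1$ onto the $\gamma_j$. These geodesics are nested, all meet $\sigma$, and are pairwise disjoint, so the projections are genuinely monotone and the errors do not add. The second is to run the argument on blocks of length $a$ and obtain a contradiction within the first block. Either way, the conclusion is that a geodesic segment of length $<e^{-5}$ meets at most $C_1a$ lifts of cuffs, with $C_1$ universal.
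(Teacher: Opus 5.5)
The paper gives no argument for this statement. It imports it from \cite{KM12}, and the only hint at the mechanism is the remark after it: seams have length $\approx e^{-a/4}$, and the twist keeps a geodesic from running along them. Your drift argument turns that remark into a self-contained proof, and it is correct.

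To make the sign bookkeeping explicit, orient every $\gamma_j$ so that $\sigma$ crosses it from right to left, and write $f'$ for the foot of $s_j$ on $\gamma_{j+1}$. The near-side feet on $\gamma_{j+1}$ lie in $f'+\frac a2\Z$ and the far-side feet lie in $f'+\delta+\frac a2\Z$, with a single $\delta\in\{\pm1\}$ for all $j$. Hence $t_{j+1}=t_j-\delta-k\frac a2+e_j$ with $k\in\Z$ and $e_j$ the error from Step 1, and the window forces $k=0$.

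Compared with the citation, your route gives an explicit bound $N\le a/2+O(1)$. It also shows exactly where the threshold $e^{-5}$ is used, and it removes the need to check that the setting of \cite{KM12} matches $T_a$. The citation only buys brevity.

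Three points need tightening.

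(i) Lemma~\ref{nonback} only tells you that $\gamma_j,\gamma_{j+1}$ are lifts of distinct cuffs. A boundary geodesic of a lifted pants faces infinitely many lifts of each other cuff, and only its seam-neighbours are at distance $d$. You should add that $d(\gamma_j,\gamma_{j+1})<\epsilon$ forces the common perpendicular to be a seam lift. The reason is that every other boundary geodesic lies beyond the seam opposite $\gamma_j$ in one of the hexagons adjacent to $\gamma_j$, at distance $\operatorname{arccosh}(\sinh d\,\sinh\frac a2)\approx a/4$.

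(ii) The ``$O(1)$'' in the window matters, because excluding $k=\pm1$ needs a window of length $<a/2-1-2\epsilon$. This holds because $\sinh d\ge e^{-a/4}$ for all $a$, which gives $|t_j|\le a/4+\log(2\sinh e^{-5})<a/4-4$.

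(iii) Your ``main obstacle'' is not one. Each error satisfies $|e_j|\le d(p_j,p_{j+1})+\frac12\sinh^2 d(p_j,p_{j+1})$. Since the $p_j$ occur in order along $\sigma$, $\sum_j d(p_j,p_{j+1})\le\ell(\sigma)<\epsilon$, so the accumulated error is below $2\epsilon$. Even the crude bound $|e_j|\le 2\epsilon$ works, since each step still drifts by at least $1-2\epsilon$ in a fixed direction. There is no circularity, so both workarounds can be dropped; the first is not clearly meaningful as stated anyway.
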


It is the nonzero twist that allows Lemma \ref{KM} to be correct: the lengths of the seams in $P_a$ behave like $e^{-a/4}$, and the twists prevent the geodesic from always moving along the seams.

As a result, there exists a constant $C_0>0$, such that every geodesic segment in $\H^2$ of length $s$ can cross at most $C_0as$ cuff lifts. We first apply this estimate to the partially glued surface $S_\varnothing$.

\begin{lemma}\label{init}
    For all sufficiently large $n$, $$\sys(S_\varnothing)>R.$$
\end{lemma}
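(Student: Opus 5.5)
We need to show that $S_\varnothing$, the surface with boundary obtained by gluing the $n$ pants along the $n$-cycle, has no closed geodesic of length at most $R$. The plan is to lift any such geodesic to the pants tree $T_a$, look at the cuffs it crosses, and show that it must cross an enormous number of cuffs, contradicting the Kahn--Markovi\'c crossing bound.

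\emph{Cuffs and short boundary-parallel curves.} The cuffs of $S_\varnothing$, both the interior glued curves and the boundary curves $c_i$, all have length $a=4\log n>R$. Any closed geodesic homotopic to a cuff is that cuff, so it has length $a>R$.

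\emph{Reduction to the tree.} Let $\gamma$ be any other closed geodesic in $S_\varnothing$, with length $\ell(\gamma)\le R$. Since $\gamma$ is not a cuff and $\gamma$ is essential, it must cross some interior cuff; otherwise it would lie in a single pair of pants and be homotopic to a cuff. Lift $\gamma$ to a complete geodesic $\tilde\gamma$ in the cover $T_a\to S_\varnothing$, which factors through the $r$-line of $\T$. Equivalently, the pants of $S_\varnothing$ are indexed by $\Z/n\Z$, and the $r$-edges of $\T$ map to the cycle edges. The combinatorial image of $\gamma$ is a closed walk in the $n$-cycle. Since $\ell(\gamma)\le R<a/2$, Lemma \ref{nonback} applies to $\gamma$, and also to every subsegment of $\gamma$. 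So the walk is locally non-backtracking: consecutive crossed cuffs never return through the same cuff. A non-backtracking closed walk on a cycle graph must wind around the whole cycle. Hence $\gamma$ crosses at least $n$ cuffs. Equivalently, in the cyclic cover $\tilde\gamma$ is a segment whose projection to the $r$-line advances by a nonzero multiple of $n$.

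\emph{Contradiction with Kahn--Markovi\'c.} Lift one period of $\gamma$ to a geodesic segment in $\H^2$ of length $\ell(\gamma)\le R$. By the consequence of Lemma \ref{KM}, it crosses at most $C_0aR=O((\log n)^2)$ lifts of cuffs. But it must cross at least $n$ of them. For $n$ large this is impossible, so every closed geodesic in $S_\varnothing$ has length $>R$.

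The main subtlety is the non-backtracking step. Lemma \ref{nonback} is stated for segments of length $<a/2$, but a closed geodesic of length $\le R<a/2$ satisfies this everywhere, including across the basepoint, so the cyclic walk is genuinely non-backtracking. One should also check that the walk cannot stay inside one pants. Even without the non-backtracking argument, one could simply use the general fact that a nonzero homotopy class in the cyclic structure forces winding, so crossing count $\ge n\gg O(aR)$ finishes the proof.
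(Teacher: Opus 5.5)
Your argument is correct and is essentially the paper's proof: a closed geodesic of length $\le R$ cannot lie in one pants, Lemma~\ref{nonback} makes its combinatorial walk a closed non-backtracking walk on the $n$-cycle, so it crosses at least $n$ cuffs, contradicting the $C_0aR=O((\log n)^2)$ crossing bound from Lemma~\ref{KM}. There are two minor slips. First, a closed geodesic inside one pants need not be homotopic to a cuff (it can be non-simple, e.g.\ a figure-eight), but it still has length $>a>R$ because it contains a simple subloop, which is freely homotopic to a cuff and so has length $\ge a$. Second, your closing aside fails, because winding-number-zero closed geodesics exist (e.g.\ in $P_i\cup P_{i+1}$, crossing the common cuff twice), and it is exactly Lemma~\ref{nonback} that excludes them.
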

\begin{proof}
Suppose that $S_\varnothing $ contains a closed geodesic $\gamma$ with length at most $ R$. Since $a>R$, $\gamma$ cannot be contained in a single pair of pants.

Then Lemma \ref{nonback} indicates that the combinatorial path of $\gamma$ is a closed non-backtracking walk on $n$-cycle. In particular, it must cross at least $n$ cuffs. However, by the estimate following Lemma \ref{KM}, $\gamma$ crosses at most $C_0aR=O((\log n)^2)$ cuffs, a contradiction.
\end{proof}

We then get the first quantity needed for Proposition \ref{finalaim}.

\begin{proposition}\label{combleng}
    There exists a universal constant $C_0>0$, such that every $F\in \B_R$ satisfies: $$1\leq |F|\leq C_0aR.$$
\end{proposition}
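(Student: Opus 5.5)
The lower bound $|F|\geq 1$ comes from Lemma \ref{init}. If $F=\varnothing$ then $S_F=S_\varnothing$, and Lemma \ref{init} says $S_\varnothing$ has no closed geodesic of length at most $R$. So the empty matching is not $R$-bad, and every $F\in\B_R$ is nonempty.

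For the upper bound, fix $F\in\B_R$ and a closed geodesic $\gamma\subset S_F$ with $\ell(\gamma)\leq R$. Let $F_\gamma\subset F$ be the set of pairs $\{x_i,y_i\}$ whose glued cuff $c_{x_i}=c_{y_i}$ is actually crossed by $\gamma$. I claim that $\gamma$ is already a closed geodesic in $S_{F_\gamma}$. Indeed, $\gamma$ stays inside the union of pants and glued cuffs it visits, and every gluing it uses belongs either to the fixed cycle or to $F_\gamma$. Hence $\gamma$ lies in the subsurface $S_{F_\gamma}\subset S_F$, where $S_{F_\gamma}$ is obtained from $S_F$ by cutting along the cuffs of $F\setminus F_\gamma$.

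This subsurface is embedded with geodesic boundary, so $\gamma$ is a closed geodesic of $S_{F_\gamma}$ of length at most $R$. (One should also rule out $\gamma$ being one of the unglued boundary cuffs, but those have length $a>R$.) Therefore $F_\gamma$ is $R$-bad, and minimality of $F$ forces $F_\gamma=F$. In other words, $\gamma$ crosses every cuff glued by $F$.

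It remains to bound the number of cuffs $\gamma$ crosses. Lift $\gamma$ to a geodesic segment in $\H^2$ of length $\ell(\gamma)\leq R$, one period of its axis, and apply the consequence of Lemma \ref{KM}. This uses the fact that $S_F$ is assembled from copies of $P_a$ with twist $1$, so its universal cover carries the same cuff configuration $\Lambda$ as $T_a$, at least locally, which is all that Lemma \ref{KM} needs. The segment crosses at most $C_0a\ell(\gamma)\leq C_0aR$ cuff lifts. Each pair in $F$ contributes at least one crossing, so $|F|\leq C_0aR$.

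The main obstacle I expect is justifying that Lemma \ref{KM} applies in $S_F$, which may have boundary. My plan is to embed $S_F$ isometrically in a complete surface: either glue funnels onto its boundary, or note that $\gamma$ lies in the convex core, whose universal cover is a convex subset of $\H^2$ tiled exactly like $T_a$. The count of crossings along a segment is local, so the estimate following Lemma \ref{KM} transfers unchanged.
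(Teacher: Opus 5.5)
Your proof is correct and follows the paper's argument. Lemma \ref{init} gives $|F|\geq 1$; minimality forces the short geodesic $\gamma$ to cross every cuff glued by $F$ (you cut all uncrossed cuffs at once where the paper cuts one, which is equivalent); and the Kahn--Markovi\'c crossing estimate bounds the number of such cuffs by $C_0aR$. Your concern about $S_F$ having boundary is legitimate, and the paper passes over it. The quickest fix is to note that $S_F$ embeds isometrically, with geodesic boundary, in $S_\iota$ for any fixed-point free involution $\iota\supseteq F$, so $\gamma$ is a closed geodesic of the closed surface $S_\iota$, whose universal cover carries exactly the configuration $\Lambda$.
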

\begin{proof}
By Lemma \ref{init}, the empty partial matching is not $R$-bad, so $$\min_{F\in \B_R}|F|\geq 1.$$  

For $F\in \B_R$, let $\gamma\subset S_F$ be a closed geodesic of length at most $R$. We claim that $\gamma$ crosses every cuff arising from a pair in $F$. Indeed, if it did not cross some such cuff $e$, then ungluing that cuff would leave $\gamma$ unchanged as a closed geodesic in $S_{F-e}$, contradicting the minimality of $F$. 

Thus $|F|$ is at most the number of cuff crossings of $\gamma$. Again, the estimate following Lemma \ref{KM} gives the result.
\end{proof}

\subsection{Coding geodesic segments}
We now introduce a coding for short geodesic segments in the fixed pants tree $T_a$.

Recall that the underlying trivalent tree $\T$ has vertices represented by reduced words in $r,r^{-1},m$. Fix the $m$-edge incident to the vertex 1, and let $\widetilde{c_0}$ denote the corresponding cuff in $T_a$.

\begin{definition}[Codings]
    Suppose that a geodesic segment $\beta\subset T_a$ of length at most $R$ starts on $\widetilde c_0$ and terminates on another cuff $\widetilde c$. We call $\widetilde c$ the \emph{coding} of $\beta$.
\end{definition}
 Since $R<a/2$, Lemma \ref{nonback} implies that the projection of $\beta$ to $\T$ is non-backtracking. In particular, the coding uniquely determines the successive pants it encounters, and hence the edge types encountered along the path.

Let $$W_s(\widetilde{c_0})=\{\widetilde{c}\text{ cuff of } T_a:d_{hyp}(\widetilde{c_0},\widetilde{c})\leq s\}.$$
Thus every geodesic segment of length at most $s$ starting from $\widetilde{c_0}$ has its coding in $W_s(\widetilde{c_0})$.

The following estimate gives the main geometric counting input.
\begin{proposition}
    There exists a constant $C>0$, such that for every cuff $\widetilde{c_0}$ and every $s\geq 1$,
    $$|W_s(\widetilde{c_0})|\leq Ca^2e^s.$$

\end{proposition}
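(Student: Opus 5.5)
We bound $|W_s(\widetilde{c_0})|$ by a volume (packing) argument in $\H^2$. Lift $T_a$ to its universal cover; it is a convex subset of $\H^2$ bounded by lifts of the boundary-free structure, so we may simply work in $\H^2$ with $\widetilde{c_0}$ replaced by a lift, which is a geodesic segment of length $a$ in $T_a$. We cannot lift the whole cuff, however: in $T_a$ each cuff $\widetilde c$ is a closed geodesic of length $a$. The cleanest route is to work directly in $T_a$. Around each cuff $\widetilde c$ consider the collar $N(\widetilde c)=\{x:d(x,\widetilde c)<w\}$ with $w$ chosen by the collar lemma, $\sinh w\sinh(a/2)=1$, so $w\asymp e^{-a/2}$. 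These collars are pairwise disjoint embedded annuli of area $2a\sinh w\asymp ae^{-a/2}$. This is too small. A direct area comparison would lose a factor $e^{a/2}$, so the packing must instead be done along $\widetilde{c_0}$ rather than globally.

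The plan is therefore to parametrize. Each $\widetilde c\in W_s(\widetilde{c_0})$ is realized by a shortest orthogeodesic $\beta$ from $\widetilde{c_0}$ to $\widetilde c$ of length $\ell\le s$. We lift to $\H^2$: fix a lift $\hat c_0$, a complete geodesic, with a fundamental segment $I$ of length $a$. Then $\beta$ lifts to a segment starting on $I$ and ending on a lift $\hat c$ of $\widetilde c$, and distinct $\widetilde c$ give distinct $\hat c$. The union $U$ of all lifts of cuffs hit this way lies in the $s$-neighbourhood of $I$, whose area is at most $C a e^{s}$. The counting then reduces to showing that each such $\hat c$ "owns" a region of area $\gtrsim 1/a$ inside the $(s+1)$-neighbourhood of $I$ that is disjoint from the regions owned by the others. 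The candidate for this region is a piece of the pants adjacent to $\hat c$ on the far side, of bounded size. This is where Lemma \ref{KM} enters. A unit-size ball around the endpoint of $\beta$ meets at most $C_1 a e^{5}$ cuff lifts, since every segment of length $e^{-5}$ crosses at most $C_1a$ of them. So we take, for each $\hat c$, the ball $B$ of radius $e^{-5}$ centred at the endpoint of $\beta$ on $\hat c$. These balls have area $\asymp 1$ and lie in the $(s+1)$-neighbourhood of $I$. By Lemma \ref{KM}, each point is covered by at most $O(a)$ of them: a point $x$ lying in $B$ means $\hat c$ passes within $e^{-5}$ of $x$, and a short segment through $x$ in a generic direction then crosses $\hat c$, giving at most $C_1a$ such $\hat c$.

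Comparing areas yields $|W_s|\cdot \mathrm{area}(B)\le O(a)\cdot \mathrm{area}(N_{s+1}(I))\le O(a)\cdot C a e^{s}$, which is the claimed bound $Ca^2e^s$.

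The main obstacle is the multiplicity step. One must verify that distinct cuffs in $W_s(\widetilde{c_0})$ give distinct lifts meeting a neighbourhood of $I$; this holds because $I$ is a fundamental domain for the stabilizer of $\hat c_0$, whose lifts of a given cuff are separated by the deck action. One must also convert "$\hat c$ passes within $e^{-5}$ of $x$" into "a fixed short segment through $x$ crosses $\hat c$". For the latter, I would use two orthogonal segments of length $2e^{-5}$ centred at $x$: any geodesic passing within $e^{-5}/10$ of $x$ crosses one of them transversally. To make this work, shrink the ball radius to $e^{-5}/10$ and apply Lemma \ref{KM} twice.
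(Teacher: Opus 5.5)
Your argument is correct and is essentially the paper's. Both proofs combine the area bound $\mathrm{Area}(N_{s+O(1)}(I))=O(ae^s)$ with the consequence of Lemma \ref{KM} that a ball of small universal radius meets only $O(a)$ cuff lifts. The paper covers $N_s(I)$ by the $O(ae^s)$ balls of a maximal separated net and counts the cuff lifts meeting each ball (via a small polygon around it), while you place one small ball on each selected lift and bound their overlap multiplicity (via two orthogonal segments); this is the same double count read the other way.

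Two small details to tidy. Your arms of length $2e^{-5}$ exceed the length threshold in Lemma \ref{KM}, so split each arm into pieces of length less than $e^{-5}$ (or shrink all radii by a constant factor). Also rotate the cross generically so that no arm lies along a cuff lift.
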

\begin{proof}
    Let $q:\H^2\to T_a$ be the universal covering. Let $\widehat{c_0}$ be a lift of $\widetilde{c_0}$, and let $I\subset \widehat{c_0}$ be a fundamental interval of its stabilizer. So $\ell(I)=a$.

    We first observe that there exists a sufficiently small $r_0>0$, such that every ball $B(x,r_0)\subset\mathbb H^2$ meets at most $C_1a$ cuff lifts. Indeed, we can choose a geodesic polygon containing $B(x,r_0)$, with a uniformly bounded number of sides, each of length less than $e^{-5}$. Every complete cuff lift meeting the ball must cross the boundary of the polygon, and Lemma \ref{KM} bounds the number crossing each side by $O(a)$.

    Now consider the $s$-neighborhood $N_s(I)$ of $I$. Choose a maximal $r_0$-separated set of points in $N_s(I)$. The balls of radius $r_0/2$ centered at these points are disjoint and lie in $N_{s+r_0/2}(I).$ The area of $N_{s+r_0/2}(I)$ is $$\mathrm{Area}\left(N_{s+r_0/2}(I)\right)=2a\sinh (s+\frac{r_0}{2})+2\pi\left(\cosh(s+ \frac{r_0}{2}) -1\right)\leq C_2ae^s,$$for a universal $C_2$. Comparing areas shows that the number of points is at most $C_3ae^s$. By maximality, the balls of radius $r_0$ centered at these points cover $N_s(I)$.

    By the previous observation, each ball meets at most $C_1a$ cuff lifts. Therefore $|W_s(\widetilde{c_0})|\leq C_1C_3a^2e^s$.
\end{proof}

Both the coding $\widetilde c$ and its associated combinatorial path
are objects in the fixed pants tree $T_a$, and are therefore
independent of the random involution. The randomness enters only when
this fixed path is projected to $S_\iota$.

\section{Closing Probabilities and Proof of Theorem \ref{main}}

We now estimate the probability that a \emph{fixed} coding in $T_a$ closes after projecting to the random surface $S_{\iota}$.

\subsection{Closing codings}

Fix $i\in \Z/n\Z$, and choose the quotient map such that $i$ is the quotient of the vertex 1 in $\T$.

\begin{definition}[Closing codings]
    We say that a coding $\widetilde{c}$ \emph{closes} from $i$, if after the projection to the pants decomposition graph of $S_{\iota}$, it is the same as the projection of $\widetilde{c_0}$.
\end{definition}

\begin{proposition}
    There exists a universal constant $C>0$, such that for any coding $\widetilde{c}$ of combinatorial length at most $l=o(n)$, $$\P(\widetilde{c}\text{ closes from }i)\leq \frac{Cl^3}{n}.$$
\end{proposition}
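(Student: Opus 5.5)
Encode the coding $\widetilde c$ by its combinatorial path in $\T$. Since $R<a/2$, Lemma \ref{nonback} says this path is non-backtracking, so it is a reduced word $w=w_1w_2\cdots w_l$ in $r^{\pm1},m$, and $\widetilde c$ is the edge at the end of the path. If $\widetilde c$ is an $r$-edge cuff, its projection has type different from the $m$-edge $\{i,\iota(i)\}$, so it never closes. Otherwise, closing means the walk $x_0=i,\ x_{t}=w_t(x_{t-1})$ in $S_\iota$'s pants graph ends with its last $m$-edge equal to $\{i,\iota(i)\}$. Here $r^{\pm1}$ acts by $\tau^{\pm1}$ and $m$ by $\iota$. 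The plan is to show that closing forces a ``coincidence'' during an exploration of $\iota$ along the walk, and that coincidences are rare.

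I would reveal $\iota$ step by step along the walk. Keep the set $V_t=\{x_0,\dots,x_t\}$ of visited vertices together with the partial matching $F_t$ revealed so far; both have size $O(l)$. At an $m$-step from $x_{t-1}$ there are two cases.
\begin{enumerate}
\item If $\iota(x_{t-1})$ is already revealed, the step is deterministic.
\item Otherwise, conditionally on the past, $\iota(x_{t-1})$ is uniform among the at least $n-2l-1$ unmatched points.
\end{enumerate}
Call such a fresh step \emph{bad} if $\iota(x_{t-1})$ lands within cycle distance $l$ of $V_{t-1}\cup \mathrm{supp}(F_{t-1})$. That target set has size $O(l^2)$. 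Hence each fresh step is bad with conditional probability $O(l^2/(n-2l))=O(l^2/n)$, using $l=o(n)$. A union bound over at most $l$ steps shows that some bad step occurs with probability $O(l^3/n)$.

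It remains to show that if no step is bad, the walk cannot close. Suppose no bad step occurs. Then each fresh $m$-step lands more than distance $l$ from everything seen, and the following $r^{\pm1}$-run has length $<l$. So the $r$-run explores a new arc of the cycle disjoint from all previous ones, and the next $m$-step from its endpoint is again fresh. Inductively, the visited vertices are then all distinct, and every $m$-step after possibly the first is fresh. The walk traces an embedded copy of a path in the tree, with no vertex revisited.

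For $\widetilde c$ to close, the final $m$-edge $\{x_{l-1},x_l\}$ must equal $\{i,\iota(i)\}$. Because $\widetilde c\neq\widetilde c_0$, this requires either $x_{l-1}$ or $x_l$ to be a revisit of $i$ or $\iota(i)$, or the last fresh step to land on $i$. The first is excluded by distinctness and the second by the non-bad condition. One degenerate case needs separate checking: the word begins with $m$, so $\iota(i)$ is revealed at step one. There a return to $\iota(i)$ is still a revisit of $V$ and is handled the same way.

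The main obstacle is this last deterministic step, namely the precise claim that ``no bad step'' implies ``no revisit'' and hence ``no closing''. The delicate points are boundary conventions: whether the starting cuff is counted as traversed, and how the first letter interacts with $c_0$. The issue is not the $r$-runs themselves, since their deterministic nature is exactly what produces the extra factor $l$ in the target set and hence the exponent $3$. I would handle it by including $i$, $\iota(i)$ (once revealed) and all arc endpoints in the forbidden neighborhood from the start. Then every non-fresh or colliding move is charged to a bad fresh step.
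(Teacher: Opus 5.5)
Your proposal is correct and follows essentially the same route as the paper. Both expose $\iota$ along the walk, call a fresh $m$-step bad when it lands within cycle distance $l$ of the already-visited vertices (a target set of size $O(l^2)$, hence conditional probability $O(l^2/n)$), take a union bound over at most $l$ fresh steps, and then argue deterministically that if no step is bad the walk cannot close. The only difference is cosmetic: you organize the deterministic step as an induction showing that every $m$-step is fresh and all visited vertices are distinct, whereas the paper starts from the last fresh $m$-step and shows that the rest of the path is a pure $r^{\pm1}$-run that cannot return to $i$.
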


\begin{proof}

    Let $$1=v_0,v_1,\cdots, v_s,\ s\leq l $$ be the vertices on the fixed combinatorial path in $\T$ associated to $\widetilde{c}$. For any given involution $\iota$, write $$x_{j}=\phi_{\iota}(v_j)\in \Z/n\Z$$ for the projected vertex. So an $r^{\pm 1}$-edge sends $x_j$ to $x_{j+1}=x_j\pm1$, and an $m$-edge sends $x_j$ to $x_{j+1}=\iota(x_j)$.

    At each new $m$-step (i.e. not exposed yet), conditional on all previous exposures, the new vertex $y=\iota(x_j)$ is uniform among the vertices that remain unmatched up to this step. We call this $m$-step \emph{bad}, if $$y=\iota(x_j)=\tau^q(x_k), \ \text{for some }k\leq j, \ |q|\leq l.$$ Conditional on the preceding exposures,
$$\mathbb P(\text{the $m$-step is bad})
\leq
\frac{(l+1)(2l+1)}{n-2l}=\frac{O(l^2)}{n},$$
for all sufficiently large $n$.

We claim that if the coding $\widetilde{c}$ closes from $i$, then some new $m$-step is bad. If not, consider the last new $m$-edge in the coding, and write its endpoint as $y$. There can be no later $m$-step: indeed, before the next $m$-step the path moves only by $r^{\pm1}$-steps, so its current vertex has the form $\tau^q(y),\ |q|\leq l$. If that $m$-step were already exposed, then $\tau^q(y)=x $ for some previously visited $x_k$, and hence $y=\tau^{-q}(x_k)$, contradicting that the last new $m$-step was not bad.

Thus the path contains only $r^{\pm1}$-steps after $y$. If the coding closes, it must return to $x_0=i$, so
$$i=\tau^q(y)$$
for some $|q|\leq l$, again contradicting that the last new $m$-step is not bad.

Since there are at most $l+1$ new $m$-steps, a union bound gives $$\P(\widetilde{c}\text{ closed from }i)\leq \frac{Cl^3}{n},$$for some universal constant $C>0$.
\end{proof}

\subsection{From closing codings to bad partial matchings}
Recall that the quotient map $\phi_\iota$ is chosen such that $\phi_{\iota}(1)=i$, and $\widetilde{c_0}$ is the $m$-edge in $\T$ incident to $1$.

For a minimal $R$-bad partial matching $F\in \B_R$, such that $i\in \mathrm{supp}(F)$, we can choose a closed geodesic $\gamma\subset S_F$ of length $\ell(\gamma)\leq R$. By minimality, $\gamma$ crosses every cuff arising from a pair in $F$. In particular, it crosses the projection of $\widetilde{c_0}$. We may cut $\gamma$ at the crossing and lift it to $T_a$. Then it must end on a cuff $\widetilde{c}\in W_R(\widetilde{c_0})$. Thus we associate to each $F\in\B_R$ once and for all a coding $\widetilde{c}(F)=\widetilde{c}$. By definition,
\begin{equation}\label{1}
    A_F\subset \{\widetilde{c}(F)\text{ closes from }i\}.
\end{equation}

A fixed coding $\widetilde{c}$ may correspond to different minimal $R$-bad partial matchings. Write $\mathcal{F}(\widetilde{c})=\{F\in \B_R: i\in\mathrm{supp}(F),\ \widetilde{c}(F)=\widetilde{c}\}$.

\begin{lemma}\label{indep}
    If $F_1,F_2$ are two distinct elements in $\mathcal{F}(\widetilde{c})$, then $$A_{F_1}\cap A_{F_2}=\varnothing.$$
\end{lemma}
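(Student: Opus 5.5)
Suppose $F_1\neq F_2$ both lie in $\mathcal{F}(\widetilde{c})$ and some $\iota$ lies in $A_{F_1}\cap A_{F_2}$. The plan is to show that the coding $\widetilde{c}$ determines the matching completely, forcing $F_1=F_2$.

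\emph{Step 1: the coding determines an edge sequence.} By the definition of the coding and Lemma~\ref{nonback} (applicable since $R<a/2$), $\widetilde{c}$ determines a fixed non-backtracking combinatorial path $1=v_0,v_1,\dots,v_s$ in $\T$, ending with the edge corresponding to $\widetilde{c}$. This gives a fixed word in $r^{\pm1},m$. We take the path to include the starting $m$-edge $\widetilde{c_0}$ at $v_0$ and the terminal edge $\widetilde{c}$.

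\emph{Step 2: each $F_k$ is the set of $m$-edges of this path.} For $F=F_k$, the lift to $T_a$ of the chosen closed geodesic $\gamma\subset S_F$ (cut at $\widetilde{c_0}$) follows this path. Projecting via $\phi_\iota$ with $\phi_\iota(1)=i$, each $m$-edge $v_j\to v_{j+1}$ on the path is sent to a glued cuff of $S_F$, hence to a pair $\{x_j,\iota(x_j)\}\in F$. Here $x_j=\phi_\iota(v_j)$, and the pair lies in $F$ because $\gamma$ lives in $S_F$ and only crosses glued cuffs of $S_F$. Conversely, minimality (as in Proposition~\ref{combleng}) says that $\gamma$ crosses every cuff of $F$. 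Since the cuffs crossed by $\gamma$ are exactly the projections of the edges of the path, we get
\[
F=\{\{\phi_\iota(v_j),\phi_\iota(v_{j+1})\}: v_jv_{j+1}\text{ an $m$-edge of the path}\},
\]
and this includes the closing pair $\widetilde{c_0}=\widetilde{c}$ projected.

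\emph{Step 3: the right-hand side depends only on $\iota$ and the path.} The points $x_j=\phi_\iota(v_j)$ are computed inductively: an $r^{\pm1}$-step gives $x_{j+1}=x_j\pm1$, and an $m$-step gives $x_{j+1}=\iota(x_j)$. If $\iota\in A_{F_1}\cap A_{F_2}$, then both $F_1$ and $F_2$ equal the same set computed from the same $\iota$ and the same path, so $F_1=F_2$, a contradiction. There is one subtlety: $\phi_\iota$ uses the full $\iota$, but on the path only pairs of $F$ are queried. Inductively, each $m$-step is answered by $F$ itself, so for $\iota\supset F$ the values agree with those computed in $S_F$.

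\emph{Main obstacle.} The delicate step is Step~2: making precise that the lift of $\gamma$ to $T_a$ corresponds to a path in $\T$ whose projection under $\phi_\iota$ is the path of $\gamma$ in $S_F$. The difficulty is that $S_F$ is not covered by $T_a$ in the same way as $S_\iota$. I would handle this by noting that $S_F\subset S_\iota$ isometrically as a subsurface for $\iota\supset F$, so $\gamma$ is a closed geodesic of $S_\iota$. Its lift starting on $\widetilde{c_0}$ has length $\le R$ and ends on the lift $\widetilde{c}$ covering the same cuff. The edges it crosses map under $\pi_\iota$ to cuffs of $S_F$, and these are exactly the cuffs determined by $F$.
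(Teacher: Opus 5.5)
Your proof is correct and is essentially the paper's argument: for $\iota\supset F_1\cup F_2$, the fixed tree path determined by $\widetilde{c}$ projects under $\phi_\iota$ to a single combinatorial path, and by minimality each $F_k$ is exactly the set of $m$-pairs crossed along it, which forces $F_1=F_2$. Your extra care in embedding $S_F\subset S_\iota$ and in checking that the lift depends only on $F$ (not on the extension $\iota$) spells out a point the paper leaves implicit.
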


\begin{proof}
    For $F_i\in \mathcal{F}(\widetilde{c})$, let $\gamma_{F_i}$ be the closed geodesic used to define the coding $\widetilde{c}(F_i)=\widetilde{c}$. Then by minimality, the collection of matching pairs whose corresponding cuffs are crossed by $\gamma_{F_i}$ is exactly $F_i$.

    Suppose that there exists an $\iota\in A_{F_1}\cap A_{F_2}$, which is equivalent to saying that $F_1\cup F_2\subset \iota$. Since $\iota$ is given and extends $F_1$, the combinatorial path of $\widetilde{c}$ projected to $S_{\iota}$ is fixed, and that coincides exactly with that of $\gamma_{F_1}$. The same is true for $F_2$. Therefore, the combinatorial paths of $\gamma_{F_1}$ and $\gamma_{F_2}$ coincide, and in particular, the collection of matching pairs they cross are the same, so $F_1=F_2$.
\end{proof}

As a consequence of (\ref{1}) and Lemma \ref{indep},  $$\sum_{F\in \mathcal{F}(\widetilde{c})}\P(A_{F})\leq \P(\widetilde{c}\text{ closes from }i).$$
Finally, a union bound over all possible $\widetilde{c}$ gives \begin{equation}\label{2}
    L_i(\B_R)=\sum_{\substack{F\in\B_R,\\i\in \mathrm{supp}(F)}}\P(A_F)\leq \sum_{\widetilde{c}\in W_R(\widetilde{c_0})}\P(\widetilde{c}\text{ closes from }i)\leq \frac{Ca^2l^3e^R}{n},
\end{equation}
where $l$ can be chosen to be $C_0aR$.

\subsection{Proof of Theorem \ref{main}}

Recall that all we need to prove is (\ref{posprob}):
$$\P(\cap_{F\in \B_R}\overline{A_F})>0.$$

 First, by Lemma \ref{init} the empty partial matching $F=\varnothing$ is not $R$-bad, so we can apply Proposition \ref{finalaim} for $\mathcal{F}=\B_R$. We want to prove that the quantities $K$ and $L$ defined in (\ref{defKL}) satisfy $8KL\leq 1$.

Combining (\ref{2}) with Proposition \ref{combleng}, $$L=\max_{i}L_i(\B_R)\leq \frac{Ca^2l^3e^R}{n}=O\left(\frac{1}{(\log n)^{4}}\right).$$
and by Proposition \ref{combleng},
$$K=\max_{F\in \B_R}|F|\leq caR=O((\log n)^2).$$

Letting $n\to \infty$, we have $8KL\to 0$. This completes the proof of Theorem \ref{main}.

\bibliography{references}
\bibliographystyle{alpha}


    


    

    

\end{document}